\newtheorem{proposition}{Proposition}
\newtheorem{definition}{Definition}
\newtheorem*{definition*}{Definition}
\newtheorem{remark}{Remark}
\newcolumntype{C}{>{\centering\arraybackslash}X} 
\definecolor{lime}{HTML}{A6CE39}
\DeclareRobustCommand{\orcidicon}{
	\begin{tikzpicture}
	\draw[lime, fill=lime] (0,0) 
	circle [radius=0.16] 
	node[white] {{\fontfamily{qag}\selectfont \tiny ID}};
	\draw[white, fill=white] (-0.0625,0.095) 
	circle [radius=0.007];
	\end{tikzpicture}
	\hspace{-2mm}
}
\def\lddots{\mathinner{\mkern1mu\raise1pt\hbox{.}\mkern2mu  
\raise4pt\hbox{.}\mkern2mu\raise7pt\vbox{\kern7pt\hbox{.}}\mkern1mu}}
\def\numberbysection{\@addtoreset{equation}{section}
 \def\theequation{\thesection.\arabic{equation}}}
\newcommand{\be}{\begin{eqnarray}}  
\newcommand{\ee}{\end{eqnarray}}
 \title{\bf An analytical reconstruction formula with efficient implementation for a modality of Compton Scattering Tomography with translational geometry }
\author{\textsf{Cécilia Tarpau$^{a,b,c, *}$\orcidA{},}
\textsf{Javier Cebeiro$^d$\orcidB{},}
\textsf{Geneviève Rollet$^a$\orcidC{},}\\
\textsf{Mai K. Nguyen$^b$\orcidD{}}
\textsf{and Laurent Dumas$^c$\orcidE{}}
\\
\textit{\footnotesize $^{a}$ LPTM (UMR 8089), CY Cergy Paris Université, CNRS, Cergy-Pontoise, France} 
\\
\textit{\footnotesize $^{b}$ETIS (UMR 8051), CY Cergy Paris Université, ENSEA, CNRS, Cergy-Pontoise, France} 
\\
\textit{\footnotesize $^{c}$ LMV (UMR 8100), Université de Versailles Saint-Quentin, CNRS, Versailles, France}
\\ 
\textit{\footnotesize $^{d}$ CEDEMA, Universidad Nacional de San Mart\'in, San Mart\'in, Argentina}
\\
{\footnotesize $^{*}$ Corresponding author} \\}
\date{}
\begin{document}

\maketitle
\thispagestyle{empty}
\abstract{\footnotesize  In this paper, we address an alternative formulation for the exact inverse formula of the Radon transform on circle arcs arising in a modality of Compton Scattering Tomography in translational geometry proposed by Webber and Miller (Inverse Problems (36)2, 025007, 2020). The original study proposes a first method of reconstruction, using the theory of Volterra integral equations. The numerical realization of such a type of inverse formula may exhibit some difficulties, mainly due to stability issues. Here, we provide a suitable formulation for exact inversion that can be straightforwardly implemented in the Fourier domain. Simulations are carried out to illustrate the efficiency of the proposed reconstruction algorithm.

}

\textbf{Keywords:} Analytic inversion, Compton Scattering Tomography, Image formation, Image reconstruction, Radon transform on double circle arcs 
\section{Introduction}
\label{sec:introduction}

Compton Scattering Tomography (CST) is an imaging technique whose objective is to exploit wisely Compton scattered photons by the object to scan in order to reconstruct its electron density map. Since the early proposition of this type of imaging by Lale \cite{lale1959examination}, Clarke \cite{clarke1969compton} and Farmer \cite{farmer1971new}, studies about CST systems proved already promising results in medical imaging, for instance for the identification of lung tumours \cite{redler2018compton, jones2018characterization}, but also for earthquake engineering \cite{gautam83}, cultural heritage imaging \cite{prado2017three, harding2010compton}, landmine detection \cite{hussein2005use} and agricultural measurements \cite{cruvinel2006compton}. In fact, CST has made it possible to widen the fields of application for tomography to the imaging of one-sided large objects,  because, in some configurations, sources and detectors can be placed at the same side of the object. 

The proposition of CST systems is strongly related to the study of the associated integral transform, which models data acquisition \cite{livre, nguyen2006imagerie}. These integral transforms are generalizations of the classical Radon transform on lines studied by Radon \cite{radon17} and Cormack \cite{cormack63}. While, in two-dimensional CST, manifolds are on circle arcs or double circle arcs, in three dimensions, data is acquired on toric surfaces. These circular geometries for the considered manifolds originate from the Compton effect. This physical phenomenon occurs when a photon emitted by the source with energy $E_0$ collides with an electron as it passes through matter. This photon is scattered, and deviated by an angle $\omega$ from its original direction. The photon looses also a part of its energy. The Compton formula gives us the one-to-one correspondence between the energy of scattered photons $E(\omega)$ and the related scattering angle $\omega$ 
\begin{equation}
     E(\omega) = \frac{E_0}{1+\frac{E_0}{m\,c^2}\left(1-\cos(\omega)\right)}.
\end{equation}
$m$ is the electron mass and $c$ the speed of light. This relation ensures also that scattering sites of photons with identical energy $E(\omega)$ are located on a circle arc labelled by the scattering angle $\omega$. Figure \ref{fig:cst_principle} illustrates the general functioning principle of a CST system. 

\begin{figure}
    \centering
    \includegraphics[width = 0.3\textwidth]{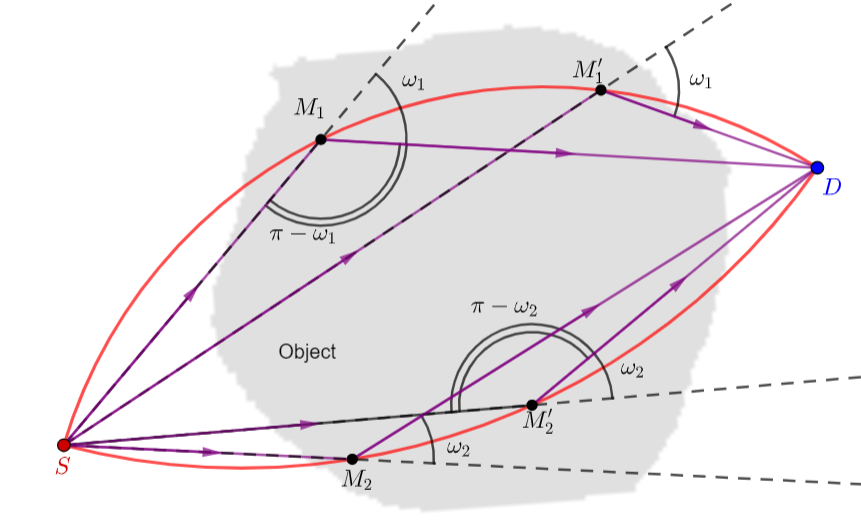}
    \caption{\small General functioning principle of a CST system. Photons are emitted by source $S$, interact at sites $M$, and are recorded at site $D$. When a photon is detected carrying an energy $E(\omega_1)$ (resp. $E(\omega_2)$), the possible interaction sites lie on the upper (resp. lower) circle arc which subtends the angle $(\pi-\omega_1)$ (resp. $(\pi-\omega_2)$).}
    \label{fig:cst_principle}
\end{figure}

Several issues around the study of these generalized Radon transforms are then of interest, such as existence and uniqueness of the solution, its stability or the range conditions. The main important problem remains the reconstruction of the image, mostly coming from the proposition of analytical inverse formulas. 

\begin{figure}[!hb]
    \centering
	\subfloat[]{\includegraphics[width= 0.25\textwidth]{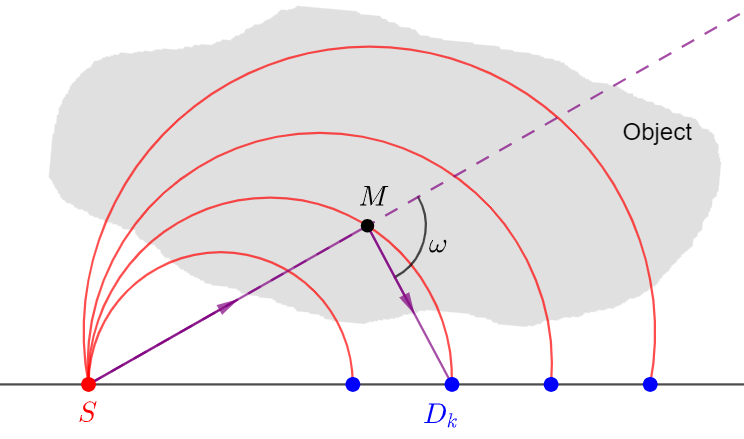}
	\label{fig:norton_modality}
	}\hspace{1cm}
	\subfloat[]{\includegraphics[width= 0.2\textwidth]{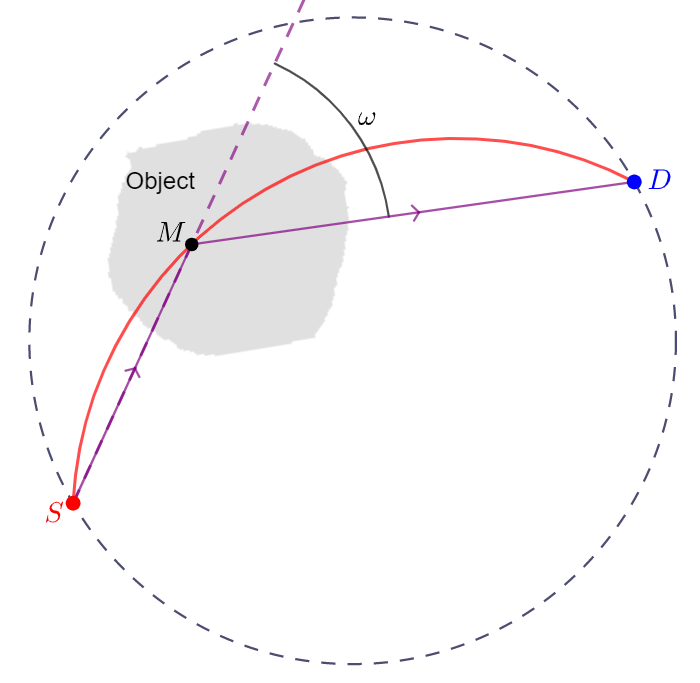}
	\label{fig:nguyen1_modality}}
	\hspace{1cm}
	\subfloat[]{\includegraphics[width= 0.25\textwidth]{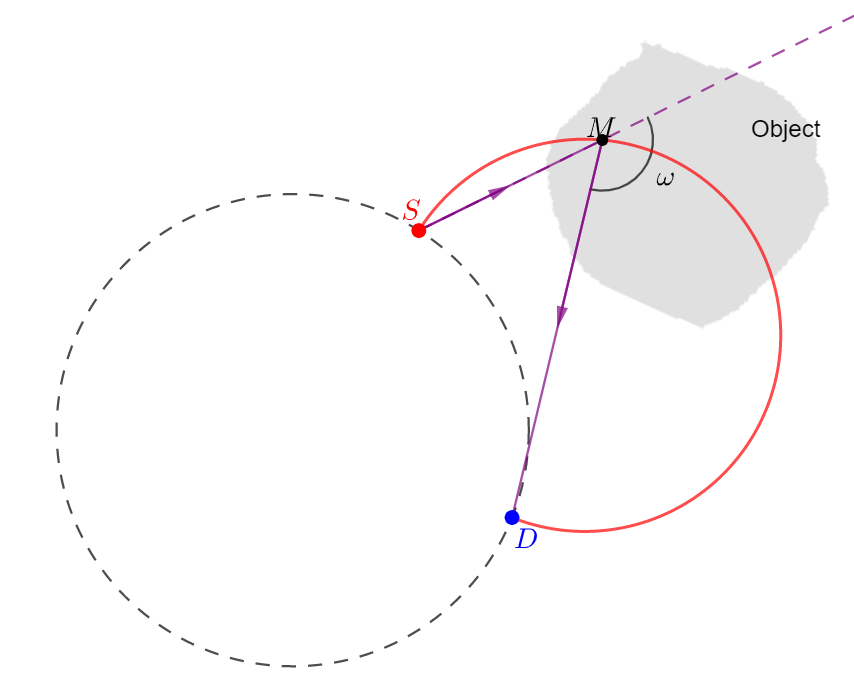}
	\label{fig:nguyen2_modality}}\\
	\subfloat[]{\includegraphics[width= 0.25\textwidth]{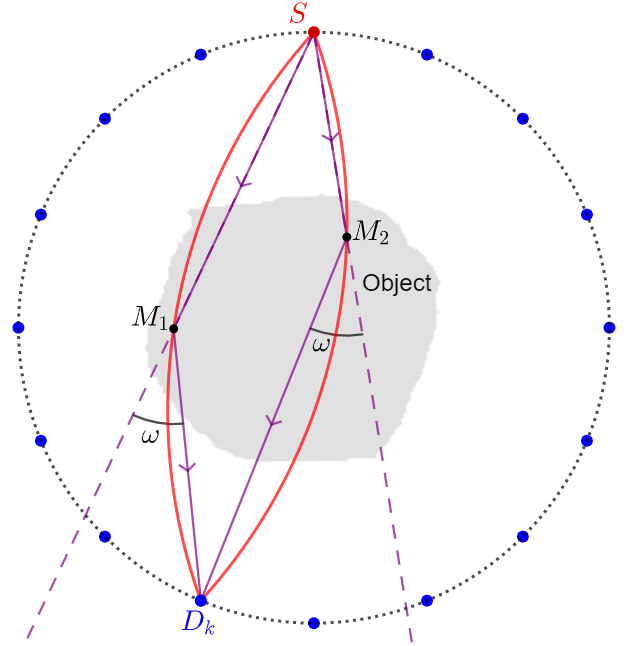}
	\label{fig:ccst}}\hspace{1cm}
	\subfloat[]{\includegraphics[width= 0.25\textwidth]{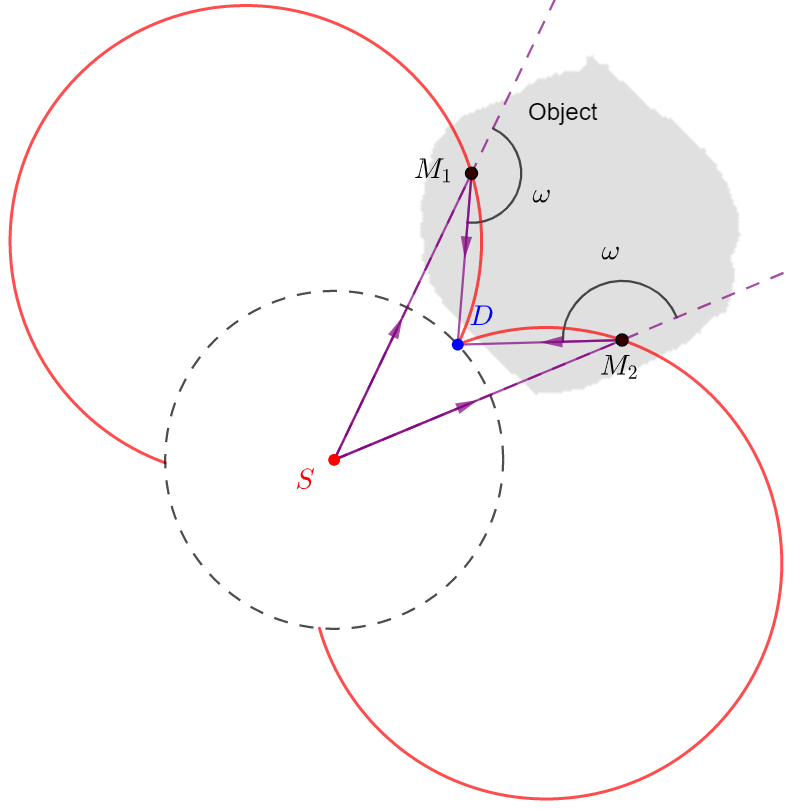}
	\label{fig:cst4}}\hspace{1cm}
	\subfloat[]{\includegraphics[width= 0.25\textwidth]{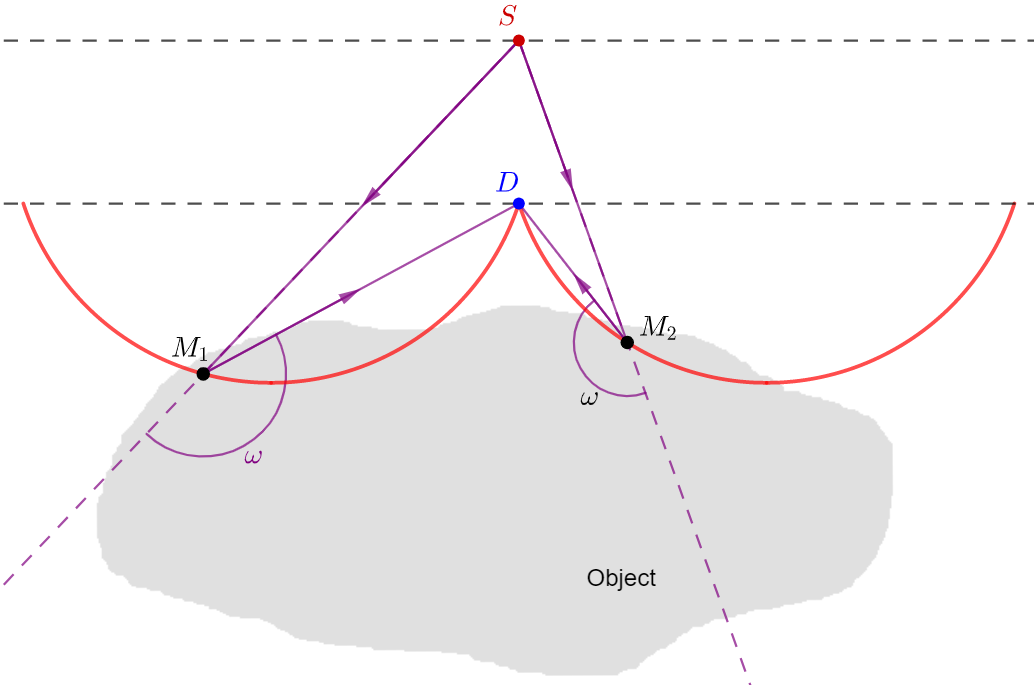}
	\label{fig:cstwebber}}
	\caption[Working principle of CST systems and previous proposed ones]{\small Previous proposed CST modalities (a, b, c, d, e, f). (a): Fixed source and detectors placed on a line. (b): Rotating pair source-detector diametrically opposed. (c): Rotating pair source detector. (d): Fixed source and detectors placed on a ring. (e): Detector rotating around a fixed source. (f) Source-detector translating simultaneously along two parallel lines. In all figures: The source $S$ is represented by a red point. The detector(s) $D$ is (are) represented by blue point(s). The $M$, $M_i$ or $M'_i$ in black, are running points and examples of scattering site. An example of trajectory for a photon whose scattering site is $M$ is shown in purple. The corresponding scattering angle is denoted $\omega$. The object to scan is represented in grey. The red continuous curves are the examples of scanning circles arcs. For (c), (e) and (f), the dashed circles (resp. lines) represents the circular (resp. linear) paths on which move the sensors.}
	\label{fig:previousCSTmodalities}
\end{figure}

In that way, the first studied two-dimensional CST system was made of a fixed line of detectors containing a fixed source \cite{norton94}. See Figure \ref{fig:norton_modality}. The associated scanning circle arcs consist of a family of semicircles with a common fixed end point (the point source) and another extremity (the considered detector) on the line. Then, circular geometries for CST have been proposed, the first one considers a pair source - detector diametrically opposed in rotation around the object \cite{nguyen10, num_inv, rigaudIEEE13} and data acquisition is modelled by a Radon transform on circle arcs having a fixed source-detector chord length. See Figure \ref{fig:nguyen1_modality}. The second consists also of a pair source - detector moving on a circle, but the distance between the two is no longer constant and depends on the scattering angle \cite{truong2011radon} (Fig. \ref{fig:nguyen2_modality}). With this modality, data measurement is performed on a family of circles orthogonal to the fixed circular path of the pair source - detector. In a third modality (see Fig. \ref{fig:ccst}), on the contrary, it was considered a set of detectors  placed on a ring containing the source, thus obtaining a completely fixed CST modality \cite{tarpau19trpms, tarpau2020compton, cebeiro18, rigaud_rotation_free2017}. The corresponding Radon transform measures the contribution of the photons scattered at the points located on circles arcs having a common extremity (the point source) and another (the considered detector) on the detector ring. Some configurations, mentioned above, employ collimators to split up photons coming from different circle arcs. Geometries without collimation have also been studied. 
Consequently, for a given position of detector, the acquisition is performed on a family of double circle arcs and the amount of registered data is potentially doubled. This feature may also reduce the acquisition time and finally radiation exposition in comparison with a similar geometry with collimation. Among the proposed CST systems without collimation, one supposes a fixed source and a detector rotating around this source \cite{tarpau2020tci}. See Figure \ref{fig:cst4}. The other one, proposed in \cite{webber2020compton}, is made of a source and a detector which translate along a line (Fig. \ref{fig:cstwebber}).

The direct reconstruction of volumes is also of interest with the proposition of three-dimensional CST systems. These modalities use uncollimated sources and detectors, and data acquisition is performed on toric surfaces. In many cases, these 3D modalities correspond to extensions of 2D systems. Circular geometries become thus spherical or cylindrical \cite{webber2018three, rigaud20183d, cebeiro2021ip} and linear geometries become planar \cite{webber2020compton}.

Here, we are interested in the two-dimensional modality proposed in \cite{webber2020compton}. The purpose of this article is to develop an alternative formulation suitable for the development of a faster and efficient reconstruction algorithm. The associated reconstruction algorithm will use only classical tools such as Fast Fourier Transform (FFT) algorithm. 

The paper is outlined as follows. Section \ref{sec:modeling} recalls the general setup of the system and the model for data acquisition for the modality. Section \ref{sec:inverse_formula} introduces the main result of the paper, that is, the alternative formulation for the inverse Radon transform on double circle arcs. Section \ref{sec:numerical_formulations} will give the discrete formulation of the forward operator, as well as the proposed strategy to reconstruct the object under study.  Section \ref{sec:simu_results} discusses the obtained simulation results with a study of the influence of some parameters on reconstruction quality.


\section{Setup and measurement model of the CST system under study}
\label{sec:modeling}
\subsection{Setup}
The system under study is made of a source, assumed to be monochromatic, and a detector separated by a fixed distance from each other. The source and the detector move respectively on a horizontal line of equation $z = 3$ and $z=1$. The horizontal position of the pair source-detector is labelled by $x_0$ (see Figure \ref{fig:presentationCSTWebber}). Alternatively, this system may be sketched with fixed lines of sources and detectors that will be used in pair. 
The object, placed below the detector path, is scanned transversely. No collimation is used at the detector, hence the acquisition is performed on a family of double circle arcs (called toric sections in the original publication \cite{webber2020compton})\footnote{We position ourselves in the same frame of study as the original article, with the same working assumptions. Thus, first order Compton scattering is the only source of attenuation for radiation and data acquisition is performed with a pair source detector, assumed to be point-like. These conditions are common in the literature \cite{nguyen10, webber2018three, webber2017microlocal, cebeiro2017new, truong2019compton, tarpau19trpms, tarpau2020tci} and have been already discussed in \cite{norton94, truong2011radon, webber2018three, tarpau2020tci}.}. We parameterize these circle arcs and define the corresponding Radon transform in the next paragraph. 

\begin{figure}[!ht]
    \centering
    \includegraphics[scale = 0.8]{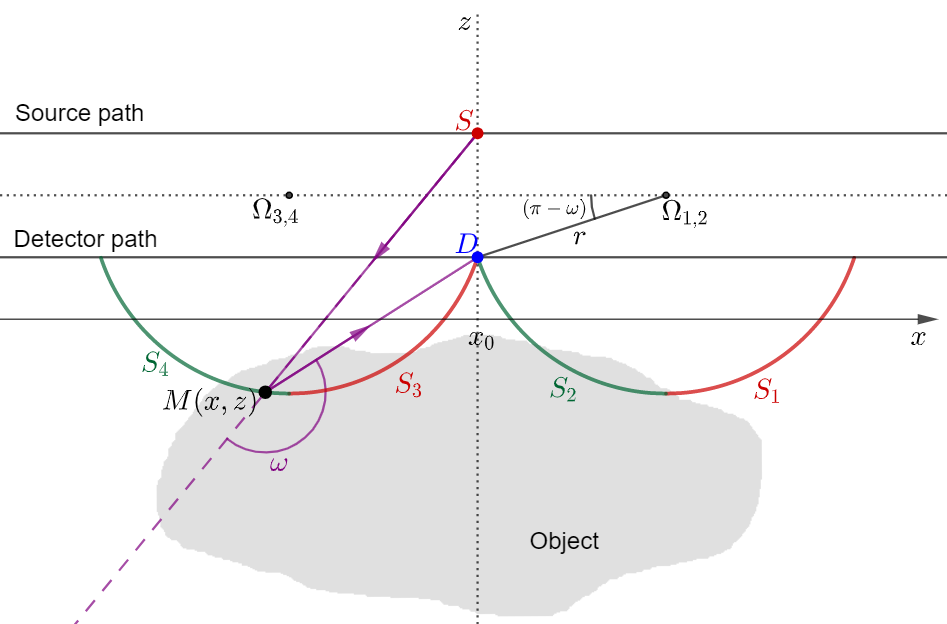}
    \caption{\small Setup and parameterization of the CST modality proposed in \cite{webber2020compton}. The source $S$ and the detector are respectively represented by a red and a blue point. To make the difference between the four half-arcs, $S_1, S_3$ and $S_2, S_4$ are respectively depicted in red and green. $\Omega_{1,2,3,4}$ denote the centres of the circles supporting the half-arcs $S_{1,2,3,4}$. The point $M$ is an example of a scattering site. }
    \label{fig:presentationCSTWebber}
\end{figure}

\subsection{Modelling of data acquisition using the CST system}
Given a scattering angle $\omega$, data acquisition is performed on a family of double circle arcs of radius $r$ where $r = 1/\sin(\pi-\omega)$ (or, equivalently, $\omega = \pi - \arcsin{(1/r)}$). For parameterization, these double circle arcs are obtained with the union of four half arcs denoted $S_j(x_0, r)$, $j\in\{1,2,3,4\}$ of respective equation 
\begin{align*}
    x_1 &= x_0 + \sqrt{r^2-1} + \sqrt{r^2 - (z - 2)^2}, \;\; 
    x_2 = x_0 +\sqrt{r^2-1} - \sqrt{r^2 - (z - 2)^2},\\ 
    x_3 &= x_0 - \sqrt{r^2-1} + \sqrt{r^2 - (z - 2)^2}, \;\; x_4 = x_0 - \sqrt{r^2-1} - \sqrt{r^2 - (z - 2)^2}
\end{align*}
and $z \in ]2-r, 1[$. See Figure \ref{fig:presentationCSTWebber}.

The Radon transform which mathematically models data measurement with this CST system is then defined as follows : 
\begin{definition}
Let $f$ be an unknown function, non-negative, continuous and compactly supported in the half plane $z<1$. The Radon transform on double circle arcs $\mathcal{R}_\mathcal{D}$ maps $f$ into the set of its integrals over the family of double circle arcs as 
\begin{equation}
    \mathcal{R}_\mathcal{D}f(x_0, r) = \int_{\bigcup_{j=1}^{4} S_{j}(x_0, r)} f(x,y) ds.
    \label{eq:R_d_def}
\end{equation}

\noindent where $ds$ refers to the elementary arc length measure on the considered double circle arc. Then, after computation of the arc length measure, we have the explicit reformulation for $\mathcal{R}_\mathcal{D}$ \cite[Proposition 3.1]{webber2020compton} :

\begin{multline}
     \mathcal{R}_\mathcal{D}f(x_0, r) = \int_1^r \frac{1}{\sqrt{1-\left(\frac{z}{r}\right)^2}}\left(\sum_{j=1}^2 f_1\left(\sqrt{r^2-1}+(-1)^j r \sqrt{1-\left(\frac{z}{r}\right)^2}+x_0, z\right) \right.+\\ \left.f_1\left(-\sqrt{r^2-1}+(-1)^j r \sqrt{1-\left(\frac{z}{r}\right)^2}+x_0, z\right)  \right) dz,
     \label{eq:R_d_explicit}
\end{multline}
where $f_1(x,z) = f(x, 2-z)$.
\end{definition}

In the original study of this modality, the invertibility of the corresponding Radon transform as well as its analytical inversion formula has been established. The invertibility was proven using the theory of integral equations and resulted in a Volterra integral equation with a weakly singular kernel in the Fourier domain. This study also leads to a formulation for inversion formula as an integral transformation with a kernel computed iteratively. The numerical calculation of this kind of kernel may require high computational time and/or memory. Furthermore, as mentioned in the Remark 3.4 of the original paper, the proposed approach by Webber is severely ill-posed, particularly in terms of stability. Implementing such a method can lead to large instabilities, even when these are due to small changes in the data. 

\section{An alternative formulation for the inversion formula of the Radon transform on double circle arcs}
\label{sec:inverse_formula}
In this section, we state the main result of the paper, a different formulation for the associated inversion formula, that will be easier to implement numerically.   Let us introduce before some notations that will be used in the proofs. 

\subsection{Notations}

It is useful to define the following transform pairs. 

\begin{definition}[Fourier transform] Let $f$ be a compactly supported function in $\mathbb{R}^n$. The n-dimensional Fourier transform of $f$, denoted $\widehat{f}$, is given by
\begin{equation}
    \widehat{f}(\boldsymbol{\xi}) = \int_{\mathbb{R}^n} f(x)e^{-i\boldsymbol{x}\cdot\boldsymbol{\xi}}d\boldsymbol{x}
\end{equation}
with $\boldsymbol{\xi} \in \mathbb{R}^n$. The inverse Fourier transform is 
\begin{equation}
    f(\boldsymbol{x}) = \frac{1}{(2\pi)^n} \int_{\mathbb{R}^n} \widehat{f}(\boldsymbol{\xi})e^{i\boldsymbol{x}\cdot\boldsymbol{\xi}}d\boldsymbol{\xi}. 
\end{equation}
\end{definition}

\begin{definition}[Fourier cosine transform \cite{Gradshteyn:1702455}] Let $f$ be a compactly supported function in $\mathbb{R}^+$. The Fourier cosine transform of $f$, denoted $\widehat{f}^c$, is given by
\begin{equation}
    \widehat{f}^c(\xi) = \sqrt{\frac{2}{\pi}}\int_{0}^\infty f(x)\cos(x\xi)dx
\end{equation}
with $\xi \in \mathbb{R}$. The inverse Fourier cosine transform is 
\begin{equation}
    f(\xi) = \sqrt{\frac{2}{\pi}}\int_{0}^\infty \widehat{f}^c(\xi)\cos(x\xi)d\xi.
\end{equation}
\end{definition}

We define also the Hankel transform. 

\begin{definition}[Hankel transform \cite{RB1990}] 
Let $f$ be a compactly supported function in $\mathbb{R}^+$. The zero-order Hankel transform of $f$ is defined as
\begin{equation}
    \mathcal{H}_0 f(\eta) = \int_0^\infty f(r) J_0(\eta r)rdr
\end{equation}
where $J_0$ stands for the Bessel function of the first kind of order $0$. 
\end{definition}

Finally, we recall the integral representation of the Bessel function $J_0$:
\begin{equation}
    J_0(x) = \frac{1}{2\pi} \int_{-\pi}^\pi e^{ix\sin{\theta}}d\theta.
\end{equation}

\subsection{Inversion formula}

\begin{proposition}
Denoting $\mathcal{G}f(x_0,r)$ the operator whose Fourier transform according to the first variable is 
\begin{equation}
    \widehat{\mathcal{G}}f(\xi, r) = \frac{\widehat{\mathcal{R}}_\mathcal{D}f(\xi, r)}{2r\cos{(\xi \sqrt{r^2-1})}},
    \label{eq:expressionG}
\end{equation}

\noindent if $r>1$ and $0$ when $r\in[0,1]$, the unknown function $f$ is completely recovered from $\widehat{\mathcal{G}}f$ as follows 
\begin{equation}
    f(x, z) = \frac{1}{4\pi}\int_{-\infty}^{\infty}e^{ix\xi} \int_0^\infty \mathcal{H}_0\widehat{\mathcal{G}}f(\xi, \sqrt{\xi^2+\sigma^2})\cos{(\sigma(2-z))}\sigma d\sigma d\xi.
    \label{eq:final_theorique_post_fourier}
\end{equation}

\end{proposition}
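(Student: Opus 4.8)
The plan is to pass to the Fourier domain in the translation variable $x_0$, where $\mathcal{R}_\mathcal{D}$ decouples into a one-parameter family (indexed by the frequency $\xi$) of one-dimensional integral equations in the depth variable $z$, and then to invert each of them by a zero-order Hankel transform in $r$ followed by an inverse Fourier cosine transform in $z$. Throughout, $\widehat{\,\cdot\,}$ denotes the Fourier transform in the first variable and $f_1(x,z)=f(x,2-z)$, so $\widehat{f_1}(\xi,z)=\widehat{f}(\xi,2-z)$ is supported in $z>1$ because $f$ is supported in $z<1$.

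First I would compute $\widehat{\mathcal{R}}_\mathcal{D}f$ from \eqref{eq:R_d_explicit}. Because $x_0$ enters only as a translation of the first argument of $f_1$, the shift theorem replaces each of the four summands by $e^{ic\xi}\widehat{f_1}(\xi,z)$ with $c\in\{\pm\sqrt{r^2-1}\pm\sqrt{r^2-z^2}\}$ (using $r\sqrt{1-(z/r)^2}=\sqrt{r^2-z^2}$), and grouping the four exponentials gives
\begin{equation}
\bigl(e^{i\xi\sqrt{r^2-1}}+e^{-i\xi\sqrt{r^2-1}}\bigr)\bigl(e^{i\xi\sqrt{r^2-z^2}}+e^{-i\xi\sqrt{r^2-z^2}}\bigr)=4\cos\!\bigl(\xi\sqrt{r^2-1}\bigr)\cos\!\bigl(\xi\sqrt{r^2-z^2}\bigr).
\end{equation}
With $1/\sqrt{1-(z/r)^2}=r/\sqrt{r^2-z^2}$ this yields $\widehat{\mathcal{R}}_\mathcal{D}f(\xi,r)=4r\cos(\xi\sqrt{r^2-1})\int_1^r \widehat{f_1}(\xi,z)\cos(\xi\sqrt{r^2-z^2})\,\frac{dz}{\sqrt{r^2-z^2}}$, so dividing by $2r\cos(\xi\sqrt{r^2-1})$ as in \eqref{eq:expressionG},
\begin{equation}
\widehat{\mathcal{G}}f(\xi,r)=2\int_1^r \widehat{f_1}(\xi,z)\,\frac{\cos\!\bigl(\xi\sqrt{r^2-z^2}\bigr)}{\sqrt{r^2-z^2}}\,dz,\qquad r>1 .
\end{equation}

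Next I would apply $\mathcal{H}_0$ in $r$ to $\widehat{\mathcal{G}}f(\xi,\cdot)$ (supported in $r>1$), exchange the $r$- and $z$-integrations, and substitute $s=\sqrt{r^2-z^2}$, for which $s\,ds=r\,dr$ and hence $r\,dr/\sqrt{r^2-z^2}=ds$; this leaves the inner integral $\int_0^\infty J_0(\eta\sqrt{s^2+z^2})\cos(\xi s)\,ds$, a classical Bessel integral equal to $\cos(z\sqrt{\eta^2-\xi^2})/\sqrt{\eta^2-\xi^2}$ for $\eta>|\xi|$ and to $0$ for $\eta<|\xi|$ (it follows from the integral representation of $J_0$, or from the fact that $J_0(\eta\sqrt{x^2+z^2})$ solves the two-dimensional Helmholtz equation, by matching the $z$-dependence of its $x$-Fourier transform with the classical value $\int_0^\infty J_0(\eta x)\cos(\xi x)\,dx=(\eta^2-\xi^2)^{-1/2}$, $|\xi|<\eta$, at $z=0$). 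Evaluating at $\eta=\sqrt{\xi^2+\sigma^2}$ makes $\sqrt{\eta^2-\xi^2}=\sigma$ and the cut vacuous, so $\sigma\,\mathcal{H}_0\widehat{\mathcal{G}}f(\xi,\sqrt{\xi^2+\sigma^2})$ equals $2\int_0^\infty \widehat{f_1}(\xi,z)\cos(\sigma z)\,dz$, i.e.\ a constant multiple of the Fourier cosine transform in $z$ of $\widehat{f_1}(\xi,\cdot)$. Inverting the Fourier cosine transform recovers $\widehat{f_1}(\xi,w)$ as a $\sigma$-integral of $\mathcal{H}_0\widehat{\mathcal{G}}f(\xi,\sqrt{\xi^2+\sigma^2})\cos(\sigma w)\,\sigma$; then setting $w=2-z$, using $\widehat{f}(\xi,z)=\widehat{f_1}(\xi,2-z)$, and applying the one-dimensional inverse Fourier transform in $\xi$ produces \eqref{eq:final_theorique_post_fourier} after the $2\pi$, the $2/\pi$ from the cosine inversion, and the Hankel normalisation are collected into the prefactor.

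The main obstacle is analytic rather than structural: the Bessel integral used above is only conditionally convergent, so the two Fubini exchanges it appears in -- first to pull it out of the $(r,z)$ double integral, then to commute it with the outer $\sigma$- and $\xi$-integrals -- have to be justified, for instance by inserting an exponential or Gaussian convergence factor and passing to the limit, or by first establishing the identity for $\widehat{f_1}(\xi,\cdot)$ smooth and compactly supported in $(1,\infty)$ and then arguing by density and continuity. One must also deal with the zero set of $\cos(\xi\sqrt{r^2-1})$, on which the quotient \eqref{eq:expressionG} is to be understood in a limiting sense. The remaining task, tracking the normalisation constants through the three transforms, is mechanical.
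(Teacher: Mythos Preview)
Your proposal is correct and follows essentially the same route as the paper: Fourier transform in $x_0$ to factor out $\cos(\xi\sqrt{r^2-1})$, Hankel transform in $r$, the same change of variables reducing the inner integral to the tabulated identity $\int_0^\infty J_0(\eta\sqrt{s^2+z^2})\cos(\xi s)\,ds=\cos(z\sqrt{\eta^2-\xi^2})/\sqrt{\eta^2-\xi^2}$, and inversion of the Fourier cosine transform. Your treatment of the analytic issues (conditional convergence, Fubini, the zeros of the cosine) is in fact more explicit than the paper's, which defers the last point to a remark and does not discuss the first two.
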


\begin{proof}
With the change of variables $s = z/r$ in \eqref{eq:R_d_explicit} and taking the Fourier transform of $\mathcal{R}_\mathcal{D}$ respectively to variable $x_0$, one gets

\begin{multline}
    \widehat{\mathcal{R}}_\mathcal{D}f(\xi, r) = \int_{-\infty}^\infty dx_0 \int_{1/r}^{1}ds\frac{r e^{-i x_0\xi}}{\sqrt{1-s^2}}\cdot \\\left(\sum_{j=1}^2 f_1\left(\sqrt{r^2-1}+(-1)^j r \sqrt{1-s^2}+x_0, rs\right) + f_1\left(-\sqrt{r^2-1}+(-1)^j r \sqrt{1-s^2}+x_0, rs\right)  \right).
\end{multline}

\noindent With the second change of variables $x=x_0\pm \sqrt{r^2-1}+(-1)^j r\sqrt{1-s^2}$, one gets

\begin{flalign}
    \widehat{\mathcal{R}}_\mathcal{D}f(\xi, r) = 4 \int_{1/r}^{1}ds \frac{r}{\sqrt{1-s^2}} \widehat{f}_1(\xi, rs)\cos{(\xi \sqrt{r^2-1})}\cos{(\xi r \sqrt{1-s^2})}
    \label{eq:TF_R_davec_TF_f1}
\end{flalign}

\noindent where $\widehat{f}_1$ stands for the one-dimensional Fourier transform relatively to the variable $x$.

Using relation \eqref{eq:expressionG},  multiplying both sides of \eqref{eq:TF_R_davec_TF_f1}  by $r\cdot J_0(\eta r)$ with $\eta\geq 1$ and integrating with respect to variable $r$, for $r>1$, one recognizes the Hankel transform of $\widehat{\mathcal{G}f}$, denoted $\mathcal{H}_0\widehat{\mathcal{G}f}$

\begin{equation}
    \mathcal{H}_0\widehat{\mathcal{G}}f(\xi, \eta) = 2\int_1^\infty dr   \int_{1/r}^{1}ds\frac{r}{\sqrt{1-s^2}} \widehat{f}_1(\xi, rs)\cos{(\xi r \sqrt{1-s^2})} J_0(\eta r).
    \label{eq:halkel_of_g}
\end{equation}

\noindent Then, with the double substitution $(r = \sqrt{z^2+b^2}, s = z/\sqrt{z^2+b^2})$, one gets
\begin{equation}
   \mathcal{H}_0\widehat{\mathcal{G}}f(\xi, \eta) = 2 \int_1^\infty dz \widehat{f}_1(\xi, z) \int_{0}^{\infty}db \cos{(\xi b)}  J_0(\eta \sqrt{z^2+b^2}).
\end{equation}

\noindent The result of the $b$-integral is given in the table \cite[p.~55, eq. (35)]{bateman_table_vol1}. Finally, one gets for $0<\xi<\eta$
\begin{equation}
    \mathcal{H}_0\widehat{\mathcal{G}}f(\xi, \eta) = 2\int_1^\infty dz \widehat{f}_1(\xi, z) \frac{1}{\sqrt{\eta^2-\xi^2}}\cos{(z\sqrt{\eta^2-\xi^2})}
\end{equation}
and $\mathcal{H}_0\widehat{\mathcal{G}}f(\xi, \eta) = 0$ if $\eta\leq \xi$. 

\noindent Let $0<\xi<\eta$. Then with the fact $f_1(x, z) = 0$ for $z \in [0,1]$,  

\begin{equation}
     \int_0^\infty dz \widehat{f}_1(\xi, z) \cos{(z\sqrt{\eta^2-\xi^2})} = \frac{\sqrt{\eta^2-\xi^2}}{2} \mathcal{H}_0\widehat{\mathcal{G}}f(\xi, \eta).
     \label{eq:eq_post_table}
\end{equation}

\noindent The left-hand side is the Fourier cosine transform of $\widehat{f}_1(\xi, z)$ according the variable $z$. We can then extract $\widehat{f}_1(\xi, z)$, applying the inverse cosine transform to \eqref{eq:eq_post_table} 
\begin{equation}
    \widehat{f}_1(\xi, z) = \frac{1}{2}\int_0^\infty d\sigma \mathcal{H}_0\widehat{\mathcal{G}}f(\xi, \sqrt{\xi^2+\sigma^2})\cos{(z\sigma)}\sigma
    \label{eq:final_theorique_fourier}
\end{equation}
where $\sigma = \sqrt{\eta^2-\xi^2}$. The final equation is obtained going back to variable $f$, and applying the inverse Fourier transform.
\end{proof}

\begin{remark}
    The projections $\mathcal{\widehat{G}}f$ \eqref{eq:expressionG} contain zeros in the denominator, since the cosine function vanishes when $\xi \sqrt{r^2-1} = 2k\pi \pm \frac{\pi}{2}$, $k\in \mathbb{Z}$. From \eqref{eq:halkel_of_g} to the end of the demonstration, it was supposed that $r$ is different from $\sqrt{1+\left(\frac{\pi}{2\xi}(2k+1)\right)}$. Furthermore, this may be a source of instability in the simulations. The addition of a regularization parameter for simulations is discussed in Section \ref{sec:image_reconstruction_simu} to prevent this.
\end{remark}
\begin{remark}
    Another reconstruction algorithm is also possible from the projections of $\mathcal{G}f$. This process is achieved performing geometric inversion. Geometric inversion is a mapping converting a point $X$ into a point $\tilde{X}$ such that $\tilde{X}X^T = q^2$, where $q \in \mathbb{R}_+^{*}$ is a constant value. The mapped point $\tilde{X}$ has the same direction as the original point $X$ but a distance of $q^2/||X||$ to the origin of the considered coordinate system. As an example, geometric inversion converts circles passing through the origin into straight lines. In the present case, the Radon transform on double circle arcs is converted into a Radon transform on an apparent family of circle arcs of similar geometry as the one studied in \cite{truong2011radon, truong_symmetry}. Although the inverse problem can be alternatively solved using geometric inversion, the approach we employ here is more straightforward. 
\end{remark}

\section{Numerical formulations for the forward and inverse transform}
\label{sec:numerical_formulations}

\subsection{Image formation}

Let $N_{SD}$ be the number of positions for the pair source - detector and $N_r$ the number of double scanning circle arcs per sensor position. We denote $x_{0,k}$, $k \in \{1, ..., N_{SD}\}$ and $r_l$, $l\in\{1, ..., N_r\}$ the discrete variables corresponding respectively to $x_0$ and $r$. The matrix of projection data $\mathcal{R}_\mathcal{D}f(x_{0,k}, r_l)$ is then computed, writing  \eqref{eq:R_d_explicit}  under a discrete form, with the change of variables $z = r\cos{\theta}$

\begin{multline}
    \mathcal{R}_\mathcal{D}f(x_{0,k},r_l) = r_l\, \Delta_\theta\;\cdot \\ \sum_{\theta\in\left[\arcsin{\left(\frac{1}{r_l}\right)}, \frac{\pi}{2}\right]} \left(\sum_{j=1}^2 f_1(x_{0,k} + \sqrt{r_l^2-1}+(-1)^j r_l \cos{\theta}, r_l\sin{\theta}) + f_1(x_{0,k} - \sqrt{r_l^2-1}-(-1)^j r_l\cos{\theta}, r_l\sin{\theta}) \right),
    \label{eq:image_formation}
\end{multline}

where $\Delta_\theta$ is the sampling angular distance of $\theta$.
The above Cartesian parameterization allows having a constant distance between running points of the considered scanning circle arcs during simulations.

\subsection{Image reconstruction }
\label{sec:image_reconstruction_simu}
For image reconstruction, we need to compute the projections $\mathcal{G}f$ in the Fourier domain according to \eqref{eq:expressionG}. This expression contains zeros in the denominator. This may induce instabilities on reconstruction. For simulations, we add a small regularization parameter denoted $\epsilon$
\begin{equation}
   \widehat{\mathcal{G}}f(\xi, r) =  \frac{\widehat{\mathcal{R}}_\mathcal{D}f(\xi, r)}{2r}\frac{\cos{(\xi \sqrt{r^2-1})}}{\epsilon^2+\cos{(\xi \sqrt{r^2-1})}^2}. 
   \label{eq:expressionG_epsilon}
\end{equation}
In terms of computational cost, the most demanding step in the implementation of \eqref{eq:final_theorique_post_fourier} is the calculation of $\mathcal{H}_0\widehat{\mathcal{G}}f$. The idea is to establish a relation between the above operator with the Fourier transform of $\mathcal{G}^\ddag f$, defined as follows
\begin{equation}
    \mathcal{G}^\ddag f(x,z) = \int_{-\infty}^\infty   \mathcal{G}f(x_0,\sqrt{(x-x_0)^2+z^2})\,dx_0. 
    \label{eq:T1dag}
\end{equation}

\noindent We have now the following proposition. 
\begin{proposition} 
 Let $(\sigma, \xi)\in[0, \infty[\times\mathbb{R}$.  $\mathcal{H}_0\widehat{\mathcal{G}}f(\xi, \sqrt{\xi^2+\sigma^2})$ is related with the two-dimensional Fourier transform of the operator $\mathcal{G}^\ddag$ as 
 \begin{equation}
     \mathcal{H}_0\widehat{\mathcal{G}}f(\xi, \sqrt{\xi^2+\sigma^2}) = 2\pi \widehat{\mathcal{G}}^\ddag f(\xi,\sigma). 
     \label{eq:relationH0G_FF2}
 \end{equation}
Consequently, from the inversion formula \eqref{eq:final_theorique_post_fourier}, it follows in the Fourier domain 
\begin{equation}
    \widehat{f}_1(\xi, \sigma) = 2\pi^2|\sigma|\widehat{\mathcal{G}}^\ddag f(\xi,\sigma). 
    \label{eq:relation_ff2}
\end{equation}
where $\widehat{f}_1$ is the two-dimensional Fourier transform of $f_1$. 
\end{proposition}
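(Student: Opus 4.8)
The plan is to establish \eqref{eq:relationH0G_FF2} by computing the two–dimensional Fourier transform of $\mathcal{G}^\ddag f$ directly from its definition \eqref{eq:T1dag}, and then to obtain \eqref{eq:relation_ff2} by inserting \eqref{eq:relationH0G_FF2} into the inversion formula \eqref{eq:final_theorique_post_fourier}.

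First I would write $\widehat{\mathcal{G}}^\ddag f(\xi,\sigma)=\int_{\mathbb{R}^2}\mathcal{G}^\ddag f(x,z)\,e^{-i(x\xi+z\sigma)}\,dx\,dz$, substitute \eqref{eq:T1dag}, apply Fubini, and carry out the change of variable $u=x-x_0$ in the $x$–integral. This uncouples the $x_0$–dependence: one is left with $\int_{\mathbb{R}}e^{-ix_0\xi}\Big(\int_{\mathbb{R}^2}\mathcal{G}f\big(x_0,\sqrt{u^2+z^2}\big)\,e^{-i(u\xi+z\sigma)}\,du\,dz\Big)\,dx_0$. The inner integral is the Fourier transform of a function that is radial in the $(u,z)$–plane, so passing to polar coordinates and using the integral representation $J_0(t)=\tfrac{1}{2\pi}\int_{-\pi}^{\pi}e^{it\sin\theta}\,d\theta$ recalled in the preliminaries, the angular integral contributes a factor $2\pi\,J_0\!\big(\rho\sqrt{\xi^2+\sigma^2}\big)$ and that inner integral becomes $2\pi\int_0^\infty \mathcal{G}f(x_0,\rho)\,J_0\!\big(\rho\sqrt{\xi^2+\sigma^2}\big)\,\rho\,d\rho$. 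Substituting back, interchanging the $x_0$– and $\rho$–integrals, and recognising $\int_{\mathbb{R}}\mathcal{G}f(x_0,\rho)\,e^{-ix_0\xi}\,dx_0=\widehat{\mathcal{G}}f(\xi,\rho)$ leaves $\int_0^\infty \widehat{\mathcal{G}}f(\xi,\rho)\,J_0\!\big(\rho\sqrt{\xi^2+\sigma^2}\big)\,\rho\,d\rho=\mathcal{H}_0\widehat{\mathcal{G}}f\big(\xi,\sqrt{\xi^2+\sigma^2}\big)$; collecting the normalising constant yields \eqref{eq:relationH0G_FF2}.

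To deduce \eqref{eq:relation_ff2}, substitute \eqref{eq:relationH0G_FF2} into the $\sigma$–integrand of \eqref{eq:final_theorique_post_fourier} (equivalently into \eqref{eq:final_theorique_fourier} written for the partial transform of $f_1$), using $f_1(x,z)=f(x,2-z)$. The key observation is that $\mathcal{G}^\ddag f(x,z)$ depends on $z$ only through $z^2$, hence is even in $z$, so $\widehat{\mathcal{G}}^\ddag f(\xi,\cdot)$ is an even function of $\sigma$. This lets one rewrite the half–line integral $\int_0^\infty \widehat{\mathcal{G}}^\ddag f(\xi,\sigma)\cos(\sigma z)\,\sigma\,d\sigma$ as $\tfrac12\int_{-\infty}^{\infty}\widehat{\mathcal{G}}^\ddag f(\xi,\sigma)\,|\sigma|\,e^{i\sigma z}\,d\sigma$; combined with the outer $\tfrac{1}{2\pi}\int e^{ix\xi}(\cdot)\,d\xi$ this is precisely the two–dimensional inverse Fourier transform of $|\sigma|\,\widehat{\mathcal{G}}^\ddag f$. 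Comparing with the definition of the inverse $2$D Fourier transform and reading off $\widehat{f}_1(\xi,\sigma)$ gives \eqref{eq:relation_ff2}.

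The routine ingredients are the two changes of variables and the polar–coordinate evaluation of the radial Fourier transform. I expect the main obstacle to be making the two interchanges of integration rigorous: they are justified because $f$ is continuous and compactly supported and $\mathcal{G}f$ vanishes for $r\le1$, but the denominator $\cos(\xi\sqrt{r^2-1})$ in \eqref{eq:expressionG} has a discrete set of zeros (the same issue flagged in the Remark following the inversion formula), so strictly one works away from that set or with the regularised projections \eqref{eq:expressionG_epsilon}. A second subtle point is the passage from the Fourier–cosine representation on $\mathbb{R}^+$ to a genuine $2$D Fourier transform over $\mathbb{R}^2$: it is exactly the evenness in $\sigma$ of $\widehat{\mathcal{G}}^\ddag f$ that makes this step legitimate, and it shows that the procedure reconstructs the even–in–$z$ extension of $f_1$, which coincides with $f_1$ on its physical support $z>1$.
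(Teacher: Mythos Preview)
Your argument is essentially the paper's own proof run in the opposite direction: the paper starts from $\mathcal{H}_0\widehat{\mathcal{G}}f$, inserts the integral representation of $J_0$, shifts the angular variable, and passes to Cartesian coordinates to recognise $\widehat{\mathcal{G}}^\ddag f$, whereas you start from $\widehat{\mathcal{G}}^\ddag f$, shift by $u=x-x_0$, and go to polar coordinates to recognise the Hankel transform---the same ingredients (Fubini, Bessel representation, radial Fourier $\leftrightarrow$ Hankel) in reverse order. For the second part you proceed exactly as the paper does, and in fact you make explicit the evenness-in-$\sigma$ step that the paper uses tacitly when passing from the cosine integral on $[0,\infty)$ to the full-line Fourier integral with weight $|\sigma|$.
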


\begin{proof} From the definitions of Fourier and Hankel transforms and with the integral representation of the Bessel function,
one gets

\begin{align}
    \mathcal{H}_0\widehat{\mathcal{G}}f(\xi, \sqrt{\xi^2+\sigma^2}) &=  \frac{1}{2\pi}\int_{-\infty}^\infty dx_0 \int_0^\infty dr\, \mathcal{G}f(x_0,r) \left(\int_{-\pi}^\pi e^{ir\sqrt{\xi^2+\sigma^2}\sin(\theta)}d\theta\right)\,e^{-i\xi x_0}. 
    \label{eq:reecriture_H_0_G}
\end{align}

There is an angle $\phi{(\sigma, \xi)}\in[0, 2\pi[$ which corresponds to the angular coordinate of the point $(\sigma, \xi)$, such that $\sigma = \sqrt{\xi^2+\sigma^2}\cos{(\phi{(\sigma, \xi)})}$ and $\xi = \sqrt{\xi^2+\sigma^2}\sin{(\phi{(\sigma, \xi)})}$. Using the property of periodicity of trigonometric functions, it follows that
\begin{equation}
    \mathcal{H}_0\widehat{\mathcal{G}}f(\xi, \sqrt{\xi^2+\sigma^2}) =  \frac{1}{2\pi}\int_{-\infty}^\infty dx_0 \int_0^\infty dr\, \mathcal{G}f(x_0,r) \left(\int_{-\pi}^\pi e^{ir\xi\sin{\theta} + \sigma\cos{\theta}}d\theta\right)\,e^{-i\xi x_0}. 
    \label{eq:reecriture_H_0_G_trigo}
\end{equation}

Changing variables $x = r\cos{\theta}$ and $z =r\sin{\theta}$, 
\begin{equation}
    \mathcal{H}_0\widehat{\mathcal{G}}f(\xi, \sqrt{\xi^2+\sigma^2}) =\frac{1}{2\pi}\int_{\mathbb{R}^2} dx dz  \left(\int_{-\infty}^\infty dx_0  \mathcal{G}f(x_0,\sqrt{(x-x_0)^2+z^2}) \right) e^{-i(x\xi+z\sigma)}
    \label{eq:H_O_G_cartésien}
\end{equation}

The right-hand side of \eqref{eq:H_O_G_cartésien} is the two-dimensional Fourier transform of $ \mathcal{G}^\ddag$, weighted by $2\pi$ \eqref{eq:relationH0G_FF2}. We are now able to reformulate the inversion formula \eqref{eq:final_theorique_fourier} as 
\begin{equation}
    \widehat{f}_1(\xi, z) = 2\pi\int_{-\infty}^\infty d\sigma \widehat{\mathcal{G}}^\ddag f(\xi, \sigma)e^{iz\sigma}|\sigma|. 
    \label{eq:final_ant}
\end{equation}

Taking the Fourier transform according to variable $z$ to the above equation \eqref{eq:final_ant} leads to \eqref{eq:relation_ff2}.
\end{proof}

This leads to the reconstruction algorithm summed up in Algorithm \ref{algo:reconssss}. 

\IncMargin{1em}
    \begin{algorithm}[!ht]
    \footnotesize
        \SetAlgoLined
        \KwData{$\mathcal{R}_\mathcal{D}f(x_0, r)$, projections on double circular arcs of function $f$}
        \KwResult{$f(x,y)$}
        Compute the one-dimensional Fourier transform of $\mathcal{R}_\mathcal{D}f(x_0, r)$ using FFT relative to the first variable \;
        Compute $\widehat{\mathcal{G}}f(\xi, r)$ according to \eqref{eq:expressionG_epsilon} and perform the inverse FFT to recover $\mathcal{G}f(x_0,r)$\; 
        For each $x_0$, interpolate the obtained data and sum on all values of $x_0$ to have the back-projected data $\mathcal{G}^\ddag f(x,z)$\;
        Perform the 2D FFT of $\mathcal{G}^\ddag f(x,z)$ and weight by $2\pi^2|\sigma|$\; 
        Compute the inverse FFT of the result to recover $f$ \;
    \caption{\footnotesize Reconstruction of object $f$}
    \label{algo:reconssss}
    \end{algorithm}
    \DecMargin{1em}

\section{Simulations results}
\label{sec:simu_results}

The original object used for simulations is Derenzo phantom, an object made of multiple circles of different sizes. The circles in the object also allow the study of the performance of the algorithm in front of different contrasts and spatial resolution, as well as its ability to reconstruct features locally tangent to lines of any slope.  
The unit length used here is the pixel. We suppose thus that the distance between the source and the detector paths of the modality is two pixels.  
The size of the object is $N\times N = 256\times 256$ pixels in all simulations. Furthermore, given the linear geometry of this modality, there is no loss of generality to consider the object centred relatively to the $z$-axis. 

\subsection{Data acquisition}
Data measurement is calculated according to \eqref{eq:image_formation}. Figure \ref{fig:obj_original&data_acqui} shows an example of data obtained for Derenzo phantom. 

\begin{figure*}[!ht]
      \begin{center}
     \subfloat[]{
      \centering
    \includegraphics[scale = 0.25]{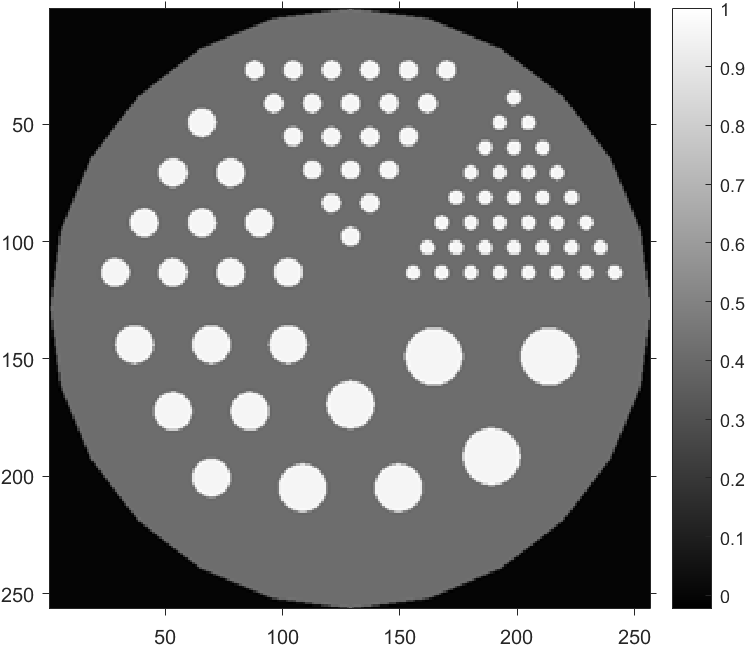}
    \label{fig:derenzo_phantom}
                         }\hspace{0.5cm}
    \subfloat[]{
      \centering
    \includegraphics[scale = 0.23]{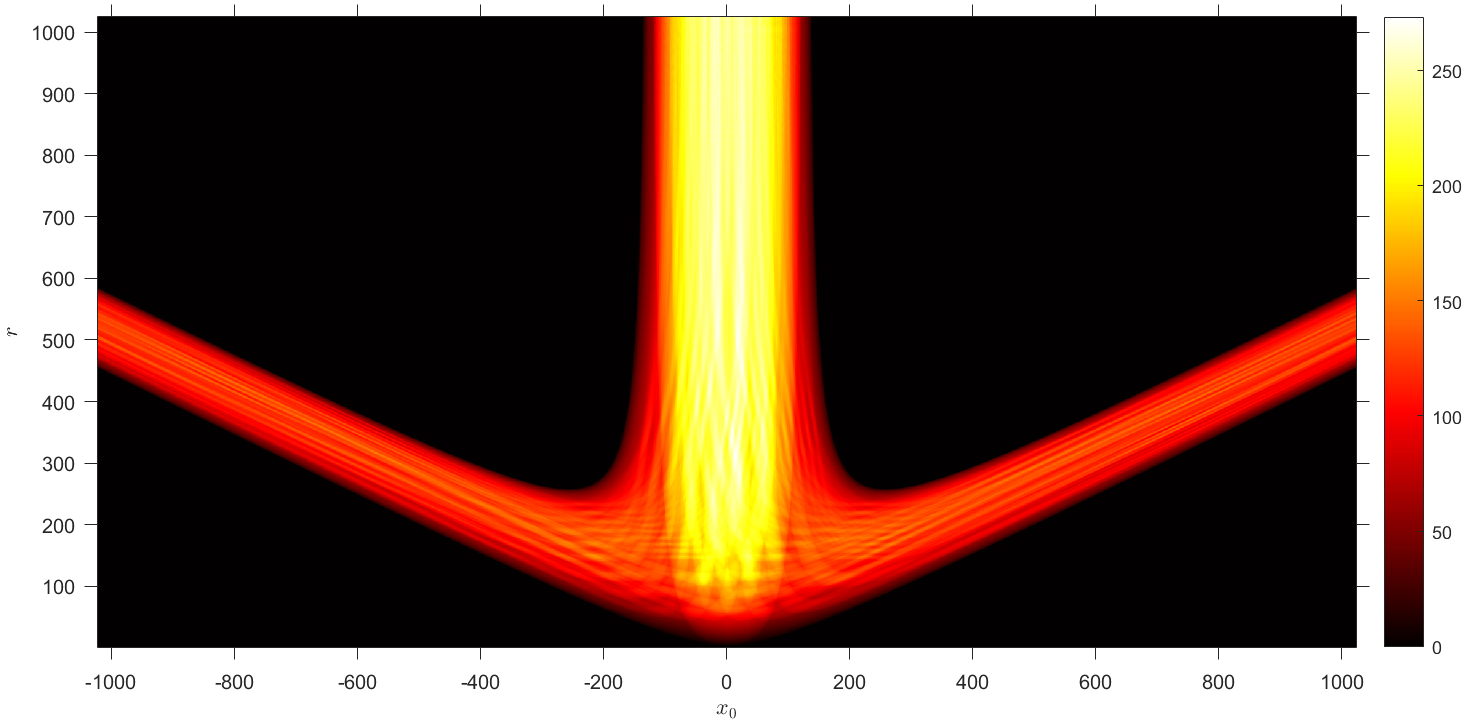}
    \label{fig:acq_Derenzo}
                         } 
    \end{center}
    \caption{(a) Original object: Derenzo phantom. (b) Corresponding acquired data for $N_{SD} = 2048$ and $N_r = 1024$. A distance of one pixel is left between the upper part of the image and the detector path ($\delta= 1$, see \ref{sec:position_obj}).}
  \label{fig:obj_original&data_acqui}
\end{figure*}

\subsection{Influence of some parameters on reconstruction quality}
In the following paragraphs, we study the influence of the other general parameters of the system such as the position of the object, the number of required positions for sensors or the number of scanning circle arcs. In addition to a visual comparison of the reconstruction quality, we propose here to measure quantitatively the error rate between the original object $f_0$ and the reconstruction $f$ with the Normalized Mean Squared Error (NMSE) $=||f-f_0||_2^2/N^2$ where $||.||_2$ refers to the $2$-norm. 

The regularization parameter $\epsilon$, which has to be small, was arbitrarily set to $0.01$.

\subsubsection{Position of the object relative to the detector path}
\label{sec:position_obj}
We analysed here the influence of the position of the object on reconstruction quality. Firstly, the number of positions $N_{SD}$ for the pair source - detector and the number of scanning circles per position $N_r$  was chosen to largely satisfy the well-known condition \cite{RB1990} $N_{SD}\times N_r \geq N^2$ and are set arbitrarily to $N_{SD}=1024$ and $N_r = 1024$. A convenient choice for these parameters will be discussed later. 
We performed various acquisition, modifying the gap $\delta$ between the detector path and the upper part of the object (see Figure \ref{fig:presentationCSTWebber}). Consequently, the object is in the square of Cartesian coordinates $\left(x\in\left[-\frac{N}{2}+1, \frac{N}{2}\right]; z\in[-N-\delta,-\delta+1 \right])$. Figure \ref{fig:varying_delta} shows the reconstruction results for $\delta = 1, 26$ and $51$ pixels which correspond respectively a position for the object in the respective domains $[-N-1, 0]$, $[-N-26, -25]$ and $[-N-51, -50]$ along the z-axis.
The difference between the three reconstructions is on the top of the object. If the object is close to the line of movement of the detector, then this part is less well reconstructed. Indeed, this distance between the object and the detector path allows having arcs of circle tangent horizontally to this part of the object. An offset of $\delta = 51$ seems to be a good trade-off between quality of reconstruction and the applicability of such a measure in practical use. For the rest of the simulations, $\delta$ is set to $51$. 

\begin{figure*}[!ht]
      \begin{center}
     \subfloat[Reconstruction for $\delta = 1$\\NMSE = 0.0112]{
      \centering
    \includegraphics[scale = 0.22]{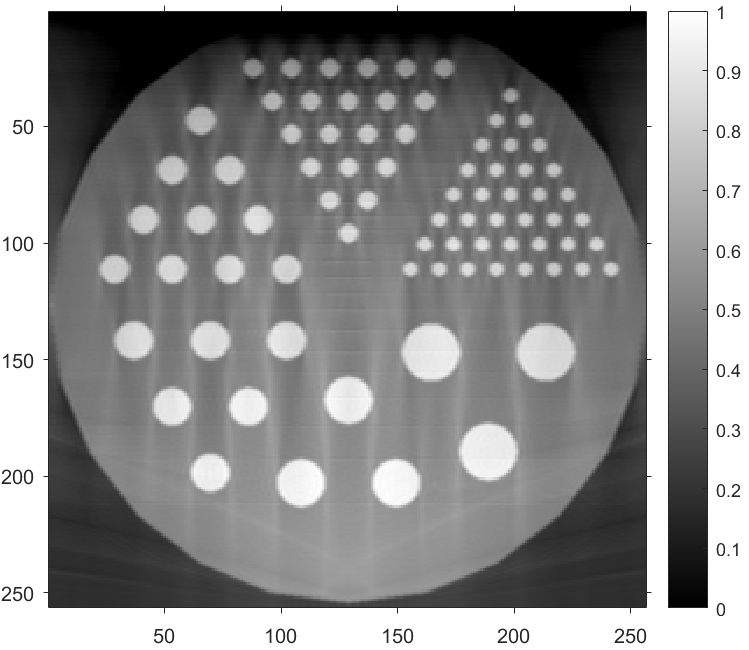}
    \label{fig:der_delta1}
                         } \hspace{0.5cm}
    \subfloat[Reconstruction for $\delta = 26$\\NMSE = 0.0074]{
      \centering
    \includegraphics[scale = 0.22]{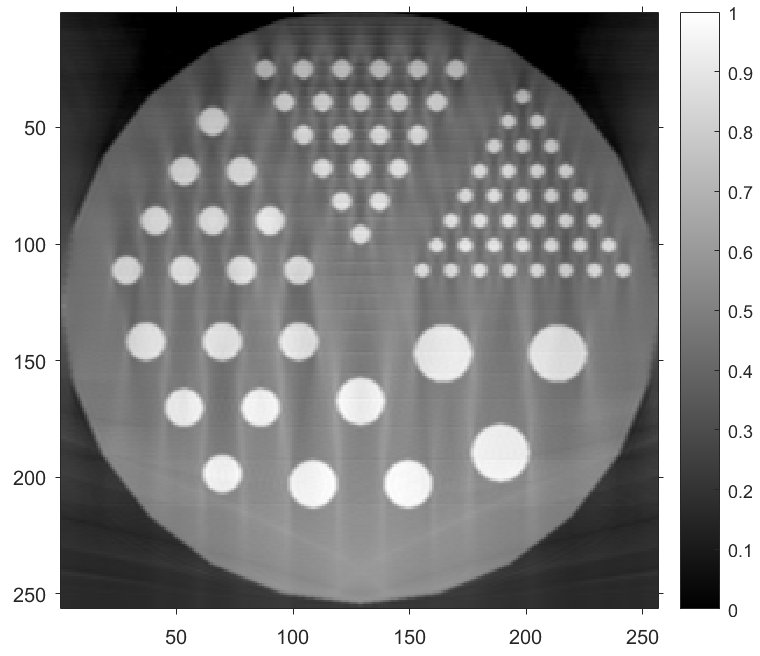}
    \label{fig:der_delta26}
                         }   \hspace{0.5cm}
    \subfloat[Reconstruction for $\delta = 51$\\NMSE = 0.0061]{
      \centering
    \includegraphics[scale = 0.22]{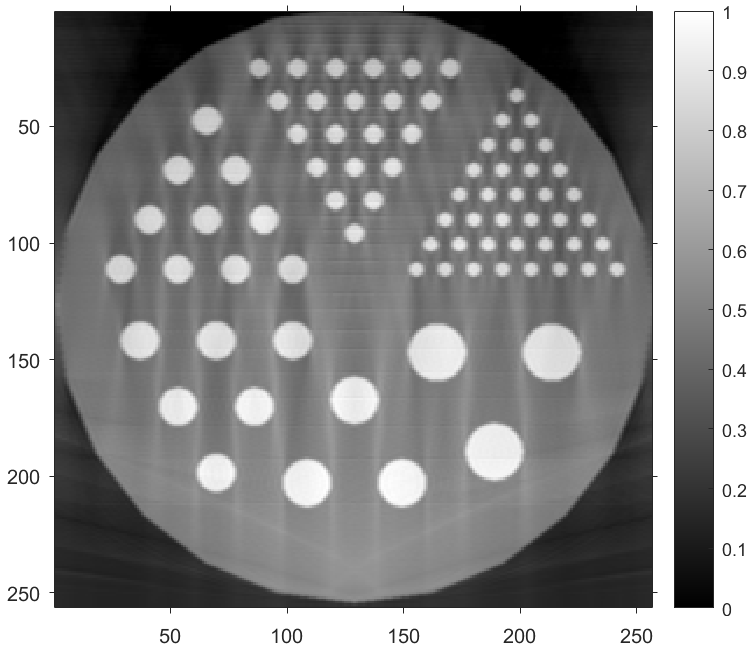}
    \label{fig:der_delta51}
                         }
    \end{center}
    \caption{Reconstruction results of the Derenzo phantom \ref{fig:derenzo_phantom} for $\delta = 1$ (a), $\delta = 26$ (b) and $\delta = 51$ (c) pixel(s).}
  \label{fig:varying_delta}
\end{figure*}

\subsubsection{Number of necessary positions for the pair source-detector}
We studied then the number of different positions required for a good quality of reconstruction. The influence of two running parameters is analysed, first, the farthest position $x_{0, max}$ from the object for the source-detector pair (that is an array of length $[-x_{0,max}, x_{0,max}]$ for the source and detector paths) and the distance $\Delta_{x_0}$ between two adjacent positions of the pair. 

Figures \ref{fig:der_NSD_2N}, \ref{fig:der_NSD_3N} and \ref{fig:der_NSD_4N} show the reconstruction results when the farthest position from the object to the pair source detector is respectively $2N, 3N$ and $4N$ with a common $\Delta_{x_0}$ set to $1$. Reconstruction from a domain $[-x_{0,max}, x_{0,max}] = [-2N, 2N]$ appears to be blurred with strong artefacts in the upper parts of the image. For $x_{0,max} = 3N$ and $4N$, reconstruction quality seems to be visually equivalent, even if the NMSE for $x_{0,max} = 4N$ is higher. This may be due to numerical approximations. For the rest of the simulation, $x_{0,max}$  is set to $3N$. 

The influence of the distance $\Delta_{x_0}$ between two adjacent positions of the pair is now evaluated. Figures \ref{fig:der_nb_d_0.5_per_unit_length}, \ref{fig:der_nb_d_1_per_unit_length} and \ref{fig:der_nb_d_2_per_unit_length} show the result for $\Delta_{x_0}= 2, 1$ and $0.5$ pixels. This represents a respective amount of $0.5, 1$ and $2$ detectors per unit length. In Figure \ref{fig:der_nb_d_0.5_per_unit_length} $(\Delta_{x_0}= 2)$, we can see streaks suggesting a lack of data for reconstructing the object. On the contrary, the doubling of the number of detectors between Fig. \ref{fig:der_nb_d_1_per_unit_length} and Fig. \ref{fig:der_nb_d_2_per_unit_length} does not bring a better quality of reconstruction. Consequently, the use of one detector per unit length seems to be a good trade-off, and the value will remain constant in the rest of the paper. 

\begin{figure*}[!ht]
      \begin{center}
     \subfloat[Reconstruction for $x_{0,max} = 2N$. NMSE = 0.0084]{
      \centering
    \includegraphics[scale = 0.22]{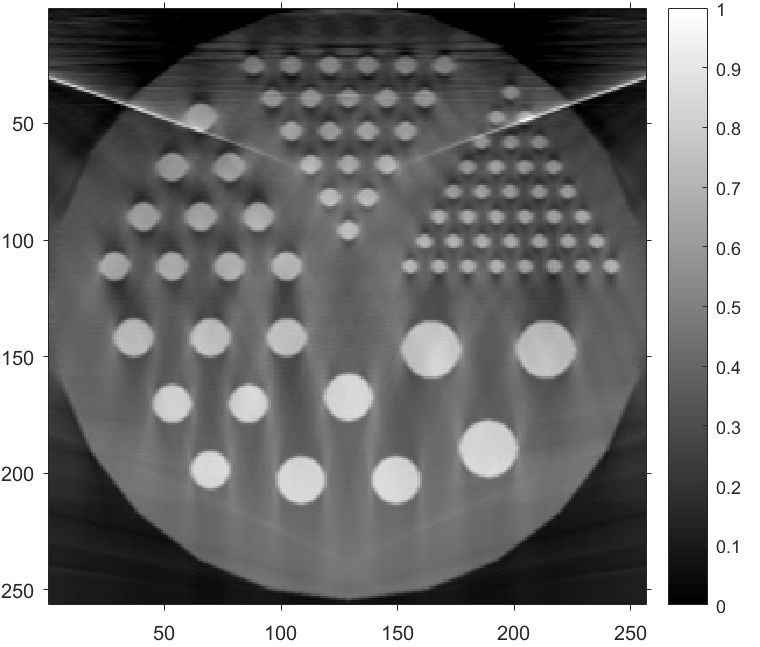}
    \label{fig:der_NSD_2N}
                         } \hspace{0.5cm}
    \subfloat[Reconstruction for $x_{0,max} = 3N$. NMSE = 0.0049]{
      \centering
    \includegraphics[scale = 0.22]{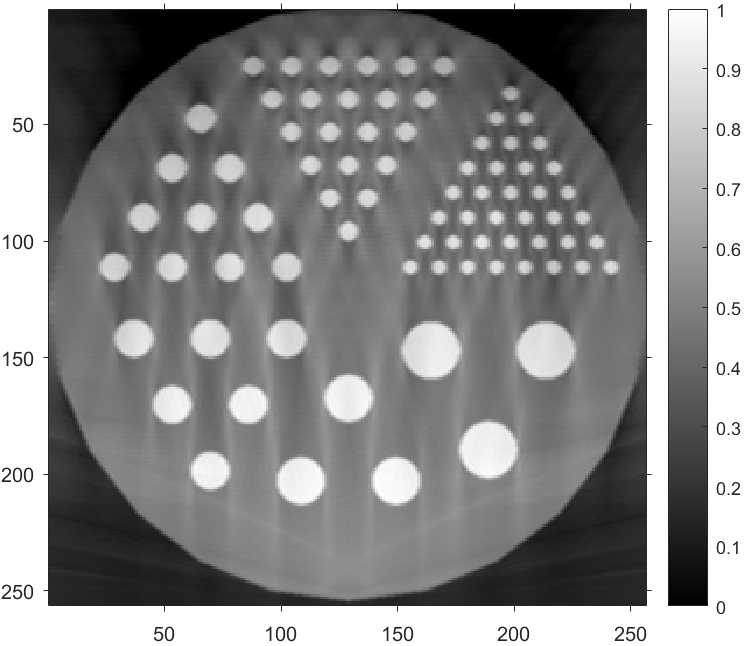}
    \label{fig:der_NSD_3N}
                         }   \hspace{0.5cm}
    \subfloat[Reconstruction for $x_{0,max} = 4N$. NMSE = 0.0061]{
      \centering
    \includegraphics[scale = 0.22]{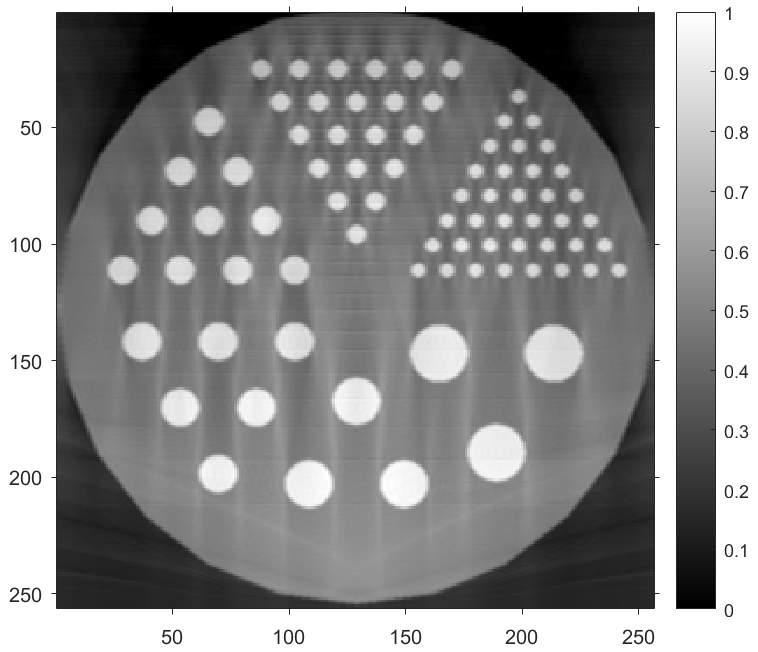}
    \label{fig:der_NSD_4N}
                         } \\
      \subfloat[Reconstruction for $0.5$ detectors per unit length. NMSE = 0.0046]{
      \centering
    \includegraphics[scale = 0.22]{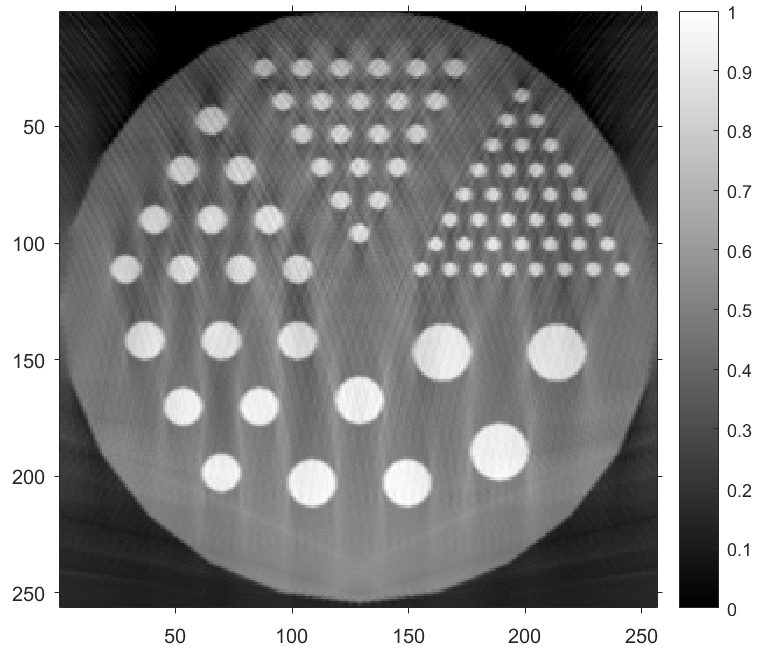}
    \label{fig:der_nb_d_0.5_per_unit_length}
                         } \hspace{0.5cm}
    \subfloat[Reconstruction for $1$ detectors per unit length. NMSE = 0.0049]{
      \centering
    \includegraphics[scale = 0.22]{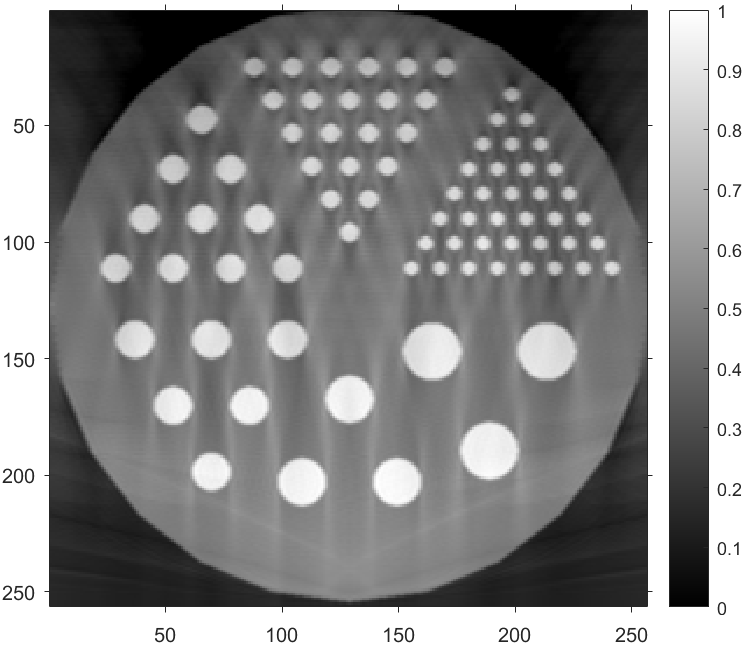}
    \label{fig:der_nb_d_1_per_unit_length}
                         }   \hspace{0.5cm}
    \subfloat[Reconstruction for $2$ detectors per unit length. NMSE = 0.0049]{
      \centering
    \includegraphics[scale = 0.22]{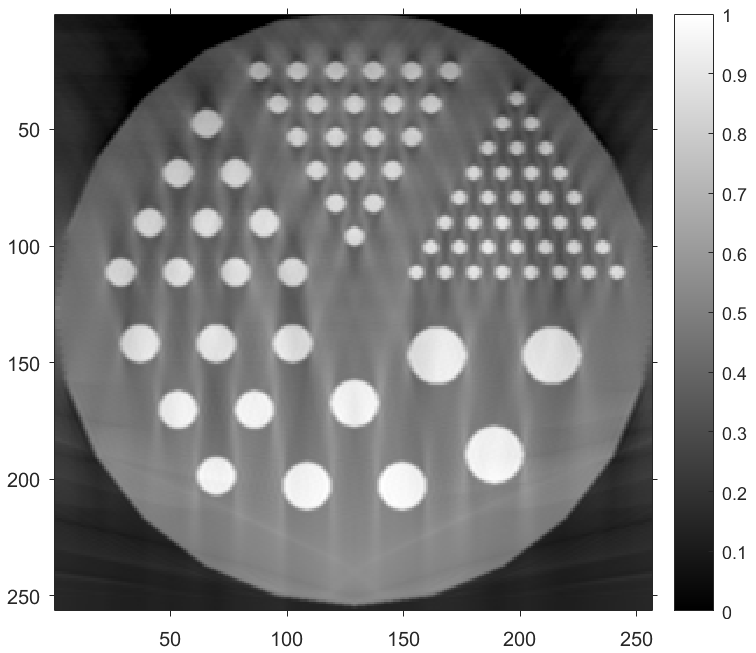}
    \label{fig:der_nb_d_2_per_unit_length}
                         } 
    \end{center}
    \caption{Evaluation of the number of source-detector positions on reconstruction quality. First row: Reconstruction results of the Derenzo phantom \ref{fig:derenzo_phantom} for $x_{0, max} = 2N$ \ref{fig:der_NSD_2N}, $3N$ \ref{fig:der_NSD_3N} and $4N$ \ref{fig:der_NSD_4N} where $\Delta_{x_0}=1$. Second row: Reconstruction results for $0.5$ \ref{fig:der_nb_d_0.5_per_unit_length}, $1$ \ref{fig:der_nb_d_1_per_unit_length} and $0.5$ \ref{fig:der_nb_d_2_per_unit_length} detector per unit length and $x_{0, max} = 3N$ remains constant.} 
  \label{fig:varying_NSD_and_nb_d_per_unitlength}
\end{figure*}

\subsubsection{Number of scanning circles per position of the pair source-detector}
The number of scanning circles necessary for reconstruction is now under study. In the same way, two parameters are of interest, that is, the maximum radius $r_{max}$ of the scanning double circle arcs to be taken and the discretization step $\Delta_r$ that have to be chosen. 
We first evaluate the consequences of the value of $r_{max}$ with three examples on Fig. \ref{fig:der_rmax_2N}, \ref{fig:der_rmax_3N} and \ref{fig:der_rmax_4N} where $r_{max}$ is set respectively to $2N, 3N$ and $4N$ and $\Delta_r = 1$. For $r_{max}=2N$, the reconstruction suggests a lack of data in front of the obtained results for $r_{max}=3N$ and $r_{max}=4N$. Notice the higher NMSE for $r_{max}=4N$, probably due to numerical approximations. 

We were finally looking for the appropriate discretization step for $r$, setting $\Delta r$ to $1, 2$ and $4$. Reconstruction results are shown respectively in Fig. \ref{fig:der_delta_r_1}, \ref{fig:der_delta_r_2} and \ref{fig:der_delta_r_4}. Reconstructions with a large discretization step exhibit blur.  

\begin{figure*}[!ht]
      \begin{center}
     \subfloat[Reconstruction for $r_{max} = 2N$. NMSE = 0.0058]{
      \centering
    \includegraphics[scale = 0.22]{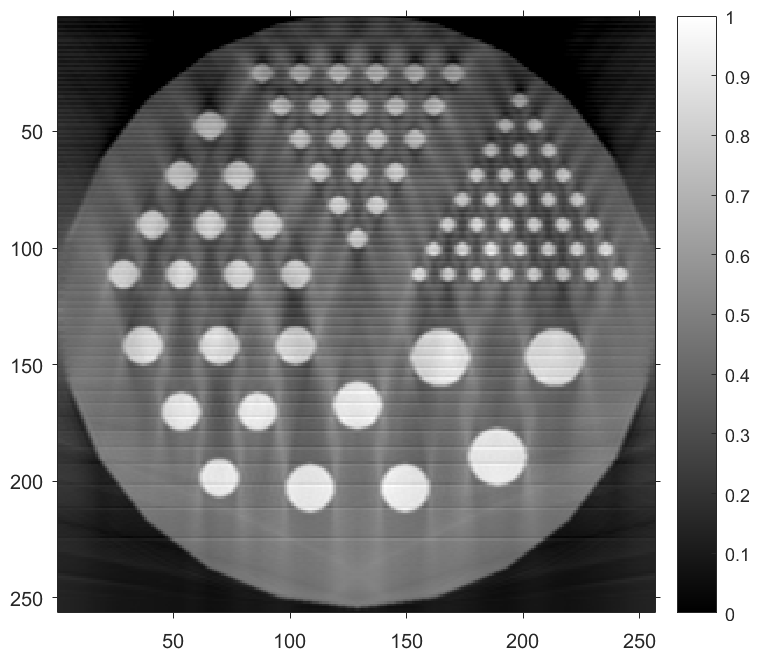}
    \label{fig:der_rmax_2N}
                         } \hspace{0.5cm}
    \subfloat[Reconstruction for $r_{max} = 3N$. NMSE = 0.0040]{
      \centering
    \includegraphics[scale = 0.22]{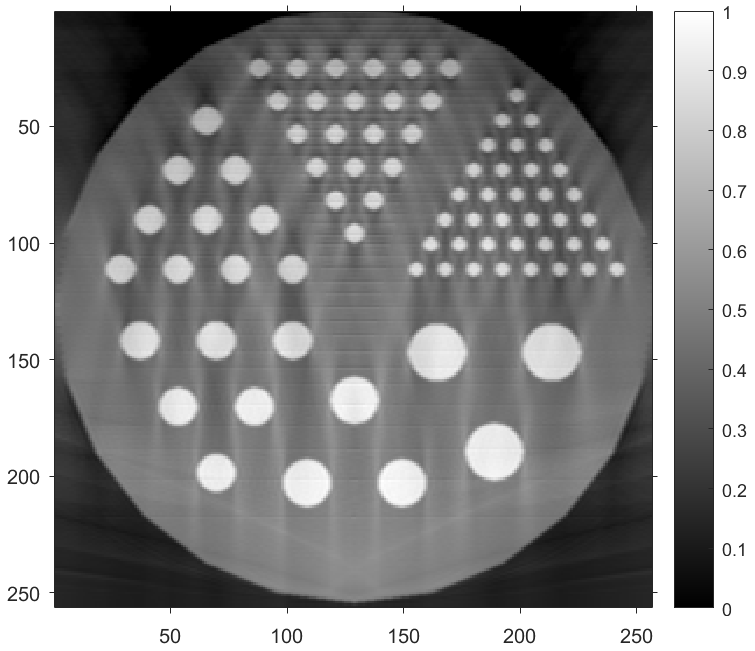}
    \label{fig:der_rmax_3N}
                         }   \hspace{0.5cm}
    \subfloat[Reconstruction for $r_{max} = 4N$. NMSE = 0.0049]{
      \centering
    \includegraphics[scale = 0.22]{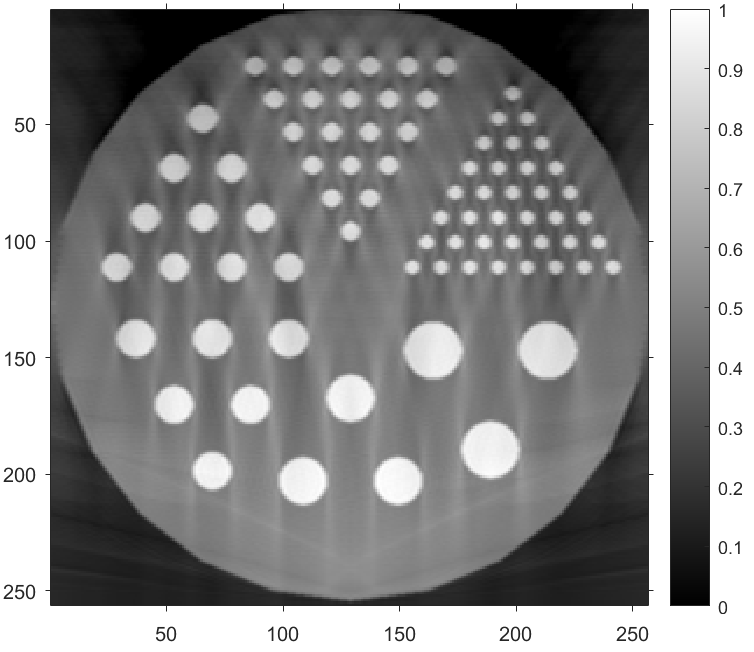}
    \label{fig:der_rmax_4N}
                         } \\
      \subfloat[Reconstruction for $\Delta_r = 1$. NMSE = 0.0040]{
      \centering
    \includegraphics[scale = 0.22]{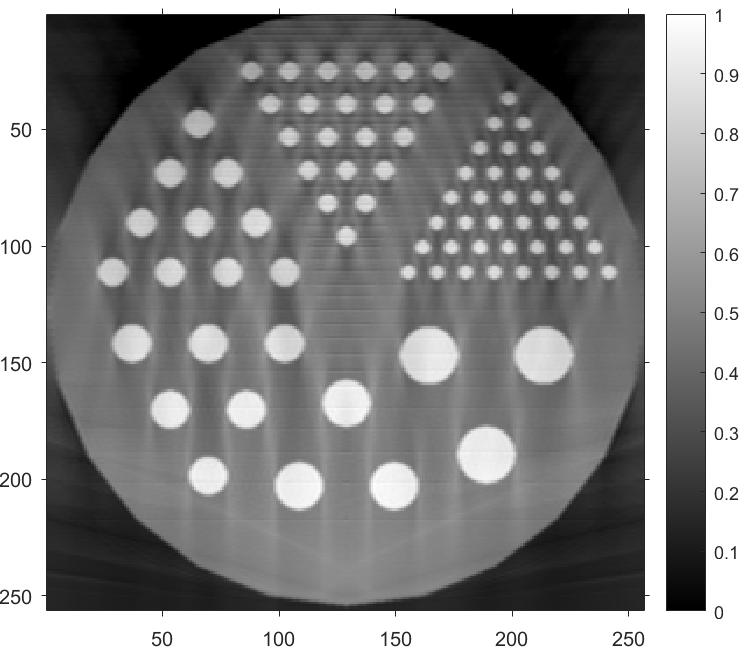}
    \label{fig:der_delta_r_1}
                         } \hspace{0.5cm}
    \subfloat[Reconstruction for $\Delta_r = 2$. NMSE = 0.0043]{
      \centering
    \includegraphics[scale = 0.22]{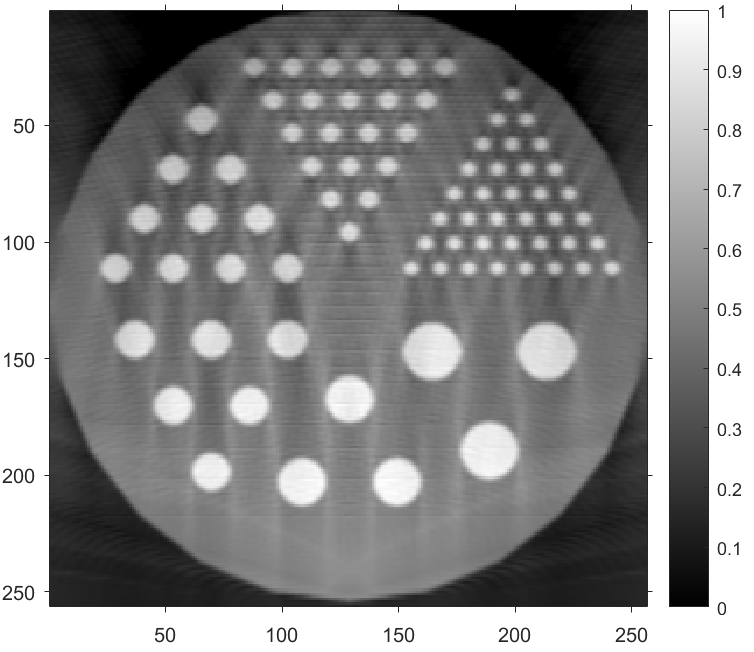}
    \label{fig:der_delta_r_2}
                         }   \hspace{0.5cm}
    \subfloat[Reconstruction for $\Delta_r = 4$. NMSE = 0.0043]{
      \centering
    \includegraphics[scale = 0.22]{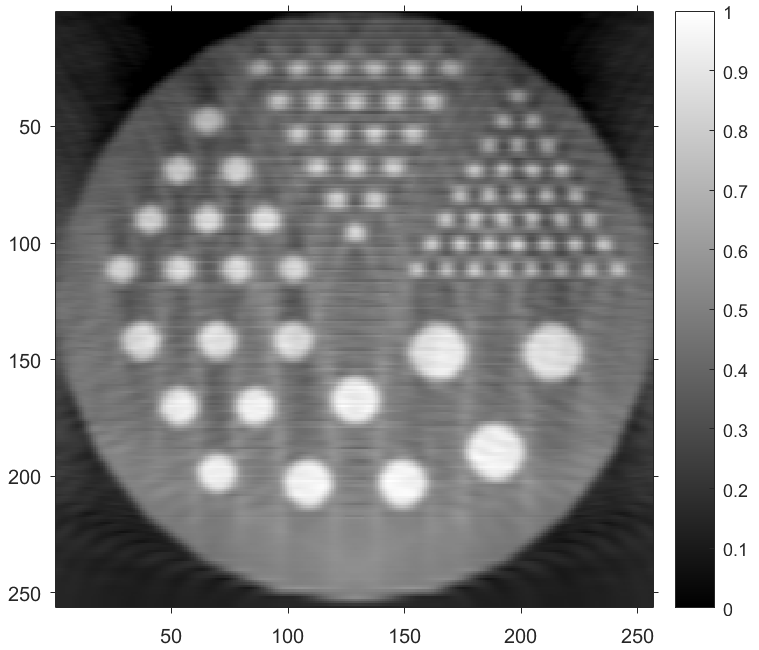}
    \label{fig:der_delta_r_4}
                         } 
    \end{center}
    \caption{Evaluation of the number of scanning circles on reconstruction quality. First row: Reconstruction results of the Derenzo phantom \ref{fig:derenzo_phantom} for $r_{max} = 2N$ \ref{fig:der_rmax_2N}, $3N$ \ref{fig:der_rmax_3N} and $4N$ \ref{fig:der_rmax_4N} where $\Delta_{r}=1$. Second row: Reconstruction results for $\Delta_{r} = 1$ \ref{fig:der_delta_r_1}, $2$ \ref{fig:der_delta_r_2} and $4$ \ref{fig:der_delta_r_4} detector per unit length and $r_{max} = 3N$ remains constant.}
  \label{fig:variying_rmax_and_Delta_r}
\end{figure*}


\subsubsection{Discussions}
The above simulations results show some interesting issues for the CST modality and the reconstruction quality that can be expected with such a system. First, this system is able to scan objects whose depth is largely oversized relative to the source-detector distance. However, this seems to be counterbalanced by the need for a consequent length for the source and detector linear paths, since sufficient reconstruction results appear for source and detector paths of length size six times greater than the object size. The necessary number of scanning circle arcs is also very important since, if we relate the values of $r_{max}$ with the scattering angles, the reconstruction quality is largely improved when $r_{max}$ is high whereas the angular distance between two values of $r_{max}$ is very small.
Considering a larger distance between the source and the detector paths for the system may reduce partially the required amount of projection data. 

Moreover, the geometry offers a sufficient reconstruction quality for every tangent with arbitrary slopes. However, one can notice that vertical slopes are slightly less well reconstructed than the other ones. This can be seen if we pay carefully attention to the left and right sides of the reconstruction. This may be problematic if the object to scan is essentially made of vertical features. One way to avoid this is to perform additional scans by rotating the object, if possible.

Some artefacts that look like shadows around the circles which compose the object remains clearly visible. The issue of artefacts in CST has already been addressed in different manners, for instance in \cite{webber2020microlocal}, where microlocal analysis was employed to alleviate artefacts in reconstructions from limited data. Moreover, in \cite{Webber2020joint_recons}, a penalized iterative algorithm was developed for a mixed modality. Regarding our approach, it can be combined in a pipeline with post-processing stages based on machine learning, as we did in \cite{ayad21} for limited data issues in classical computed tomography. Some work is on the way.

\section{Concluding remarks}
In this article, we proposed a new reconstruction algorithm for a recently proposed CST modality with translational geometry. The algorithm can be numerically implemented efficiently and reconstructions exhibit good quality. A quantitative study of the required data has also been carried out. This study proved the ability for such a CST system to reconstruct larger objects than the system itself. This advantage is moderated by the fact that it is necessary to have an important amount of source-detector positions, which can make the acquisition time longer. An interesting issue of this work concerns a reconstruction algorithm for the three-dimensional extension of this system with planar paths for the source and the detector.

\section{Acknowledgements}
We would like to thank Prof. T. T. Truong for stimulating discussions. 

C. Tarpau research work is supported by grants from R\'egion Île-de-France (in Mathematics and Innovation) 2018-2021 and LabEx MME-DII (Mod\`eles Math\'ematiques et \'Economiques de la Dynamique, de l'Incertitude et des Interactions) (No. ANR-11-LBX-0023-01).

 J. Cebeiro research work is supported by a postdoctoral grant from the University of San Mart\'in. He is also partially supported by SOARD-AFOSR (grant number FA9550-18-1-0523).
 
\bibliographystyle{IEEEtran}
{\footnotesize
\bibliography{IEEEabrv,biblio}}

\end{document}